\title[Non-crossing linked partitions ]
{Non-crossing linked partitions and multiplication of free random variables}
\author{Mihai Popa }
\address{Indiana University at Bloomington,
 Department of Mathematics, Rawles Hall,
 831 E 3rd St, Bloomington, IN 47405}
\email{mipopa@indiana.edu}
\DeclareMathAlphabet{\mathpzc}{OT1}{pzc}{m}{it}
\newtheorem{claim}{}[section]
\newtheorem{thm}[claim]{Theorem}
\newtheorem{remark}[claim]{Remark}
\newtheorem{prop}[claim]{Proposition}
\newcommand{\ca}{\mathcal{A}}
\newcommand{\kr}{\text{Kr}}
\newcommand{\cE}{\mathcal{E}}
\newcommand{\su}{\succeq}
\newcommand{\lra}{\longrightarrow}
\begin{document}
\begin{abstract}
 The material gives a new combinatorial proof of the multiplicative property of the $S$-transform. In particular, several properties of the coefficients of its inverse are connected to non-crossing linked partitions and planar trees.
 AMS subject classification:  05A10 (Enumerative Combinatorics); 46L54(Free Probability and Free
 Operator Algebras).
\end{abstract}

 \maketitle
\bibliographystyle{alpha}

\section{Introduction and definitions}

 The relation between non-crossing partitions
and free probabilities has been studies extensively (see \cite{ns}, \cite{speicher1}), but the closely
related non-crossing linked partition have not received the same attention.
Recently (see \cite{dykema}, \cite{mpttransf}), the latest object was shown to give the recurrence for computing the coefficient of the inverse of the Voiculescu's $S$-transform in a similar manner the non-crossing partitions are used for the computation of the $R$-transform (see (\ref{cumulants}) and (\ref{tcoeff}) below). The present material gives a new, combinatorial proof of the multiplicative property of the $S$-transform using a relation between planar rooted trees and the Kreweras complement.

 A non-commutative probability space is a couple $(\ca, \phi)$, where $\ca$
 is a unital ($\ast$-)algebra and $\phi:\ca\lra\mathbb{C}$ is a linear mapping such that $\phi(1)=1$
 and $\phi(x^\ast x)\geq 0$ for all $x\in\ca$ if $\ca$ is a $\ast$-algebra. The  ($\ast$-)subalgebras $\{\ca_i\}_{i\in I}$ of $\ca$
 are \emph{free} if
 \[\phi(a_1a_2\cdots a_n)=0\]
 for any $a_k\in\ca_{\varepsilon(k)}$ such that $\varepsilon(j)\neq \varepsilon(j+1)$ and
 $\phi(a_k)=0$. The elements $\{X_i\}_{i\in I}$ from $\ca$ are free if the unital $\ast$-algebras they generate are free.

  A non-crossing partition $\gamma$ of the ordered set
 $\{1,2,\dots,n\}$ is a collection $C_1,\dots, C_k$
 of subsets of $\{1,2,\dots,n\}$, called blocks, with the following
 properties:
 \begin{enumerate}
 \item[(a)]$\displaystyle{\bigsqcup_{l=1}^kC_l=\{1,\dots,n\}}$ (disjoint union of sets)
 \item[(b)]$C_1,\dots,C_k$ are non-crossing, in the sense that there
 are no two blocks $C_l, C_s$ and $i<k<p<q$ such that $i,p\in C_l$
 and $k,q\in C_s$.
\end{enumerate}

\textbf{Example 1}: Below is represented graphically the
non-crossing
partition\\
 $\pi=(1,4,6), (2,3), (5),(7,8),(9,10),
(11,12)$ $\in NCL(10)$:

 \setlength{\unitlength}{.14cm}
 \begin{equation*}
 \begin{picture}(10,8)

\put(-16,3){\circle*{1}} \put(-16,3){\line(0,1){5}}

\put(-12,3){\circle*{1}} \put(-12,3){\line(0,1){3}}

\put(-8,3){\circle*{1}}\put(-8,3){\line(0,1){3}}

\put(-4,3){\circle*{1}}\put(-4,3){\line(0,1){5}}

\put(0,3){\circle*{1}}\put(0,3){\line(0,1){3}}

\put(4,3){\circle*{1}}\put(4,3){\line(0,1){5}}

\put(8,3){\circle*{1}}\put(8,3){\line(0,1){5}}

\put(12,3){\circle*{1}}\put(12,3){\line(0,1){5}}

\put(16,3){\circle*{1}}\put(16,3){\line(0,1){5}}

\put(20,3){\circle*{1}}\put(20,3){\line(0,1){5}}

\put(-16,8){\line(1,0){20}}

\put(-12,6){\line(1,0){4}}

\put(8,8){\line(1,0){4}}

\put (16,8){\line(1,0){4}}

\put(-16.5,0.5){1}

\put(-12.5,0.5){2}

\put(-8.5,0.5){3}

\put(-4.5,0.5){4}

\put(-0.5,0.5){5}

\put(3.5,0.5){6}

\put(7.5,0.5){7}

\put(11.5,0.5){8}

\put(15.5,0.5){9}

\put(19,0.5){10}

 \end{picture}
 \end{equation*}

 Non-crossing partitions appear in the definition of the free cumulants, the multilinear functions $\kappa_n:\ca^n\lra\mathbb{C}$ given by the recurrence ($X_1,\dots,X_n\in\ca$):
 \begin{equation}\label{cumulants}
 \phi(X_1\cdots X_n)=\sum_{\gamma\in NC(n)}\prod_{\substack{C=\text{block in} \gamma\\C=(i_1,\dots,i_l)}}
 \kappa_l(X_{i_1},\dots,X_{i_l})
 \end{equation}
If $X_1=\cdots=X_n=X$, then we write $\kappa_m(X)$ for $\kappa_n(X_1,\dots,X_n)$. A remarkable property of the free cumulants is the following:
\begin{prop}\label{vanishcumulants}
 If  $\{\ca_i\}_{i\in I}$ is a family of  unital subalgebras  of $\ca$, then the following statements are equivalent:
 \begin{enumerate}
 \item[(i)] $\{\ca_i\}_{i\in I}$ are free independent.
 \item[(ii)] For all $a_k\in\ca_{\i(k)}$ ($k=1,\dots,n$ and $i(k)\in I$ we have that
   $\kappa_n(a_1,\dots, a_n)=0$ whenever there exist $1\leq l,k\leq n$ with $i(l)\neq i(n)$.
 \end{enumerate}
 \end{prop}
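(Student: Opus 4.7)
The plan is to prove the equivalence using the moment-cumulant relation (\ref{cumulants}) as the main tool, since cumulants are defined recursively through moments. Both directions will rely on an induction on $n$, but the two directions have very different flavors.

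For (ii) $\Rightarrow$ (i), I would argue directly from the definition of freeness. Pick $a_k\in\ca_{\varepsilon(k)}$ with $\varepsilon(j)\neq \varepsilon(j+1)$ and $\phi(a_k)=0$, and expand $\phi(a_1\cdots a_n)$ via (\ref{cumulants}). The hypothesis forces every surviving partition $\gamma\in NC(n)$ to have all of its blocks monochromatic with respect to $\varepsilon$. The key combinatorial observation is that every non-crossing partition contains an \emph{interval block} (a block of the form $\{i,i+1,\ldots,i+m\}$); in the present alternating situation such a block cannot have size $\geq 2$ without violating monochromaticity, so it must be a singleton $\{i\}$. That singleton contributes the factor $\kappa_1(a_i)=\phi(a_i)=0$, killing the whole term. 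Hence $\phi(a_1\cdots a_n)=0$ and freeness follows.

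For (i) $\Rightarrow$ (ii), the argument is more delicate and proceeds by strong induction on $n$. The base case $n=2$ is a direct computation: $\kappa_2(a,b)=\phi(ab)-\phi(a)\phi(b)$, which vanishes when $a$ and $b$ lie in different free subalgebras because $\phi((a-\phi(a))(b-\phi(b)))=0$. For the inductive step, I would use (\ref{cumulants}) to isolate $\kappa_n(a_1,\dots,a_n)$ as
\[
\kappa_n(a_1,\dots,a_n)=\phi(a_1\cdots a_n)-\sum_{\gamma\in NC(n)\setminus\{1_n\}}\prod_{C\text{ block of }\gamma}\kappa_{|C|}(a_{C}),
\]
and then reduce to the alternating centered case by writing each $a_k$ as $(a_k-\phi(a_k)\cdot 1)+\phi(a_k)\cdot 1$ and exploiting multilinearity together with the fact that $\kappa_m$ vanishes on entries equal to $1$ for $m\geq 2$. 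After centering and regrouping consecutive factors from the same algebra, the moment term $\phi(a_1\cdots a_n)$ vanishes by freeness, and every nontrivial $\gamma$ in the sum contains at least one block of size strictly less than $n$ that is still mixed, so the inductive hypothesis applies to finish the cancellation.

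The main obstacle is the bookkeeping in the inductive step of (i) $\Rightarrow$ (ii): one must carefully track how centering and the grouping of consecutive factors from the same subalgebra interact with the sum over $NC(n)$, and verify that after all substitutions the only cumulants that survive are those covered by the inductive hypothesis. The combinatorial identity needed — that every proper $\gamma\in NC(n)$ factors through a partition of strictly smaller "colored" size once consecutive same-color entries are merged — is the technical heart of the argument, and is most transparently handled through the interval-block induction already used in the easy direction.
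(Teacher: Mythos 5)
The paper itself offers no proof of Proposition \ref{vanishcumulants}: it is quoted from \cite{ns} and \cite{speicher1}, so your attempt can only be measured against the standard argument. Your direction (ii) $\Rightarrow$ (i) is exactly that argument and is correct: every non-crossing partition contains an interval block, and for an alternating centered word such a block is either mixed (if of size $\geq 2$) or is a singleton contributing $\kappa_1(a_i)=\phi(a_i)=0$.

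The direction (i) $\Rightarrow$ (ii), however, has a genuine gap at precisely the point you dismiss as bookkeeping. After centering, the moment $\phi(a_1\cdots a_n)$ does \emph{not} vanish by freeness unless consecutive arguments come from different subalgebras: for $a_1,a_2\in\ca_1$ and $b_1,b_2\in\ca_2$, all centered, one computes $\phi(a_1a_2b_1b_2)=\phi(a_1a_2)\phi(b_1b_2)$, which is nonzero in general. To repair this you propose ``regrouping consecutive factors from the same algebra,'' but replacing $a_i,a_{i+1}$ by the single element $a_ia_{i+1}$ changes the object under study from $\kappa_n(a_1,\dots,a_n)$ to $\kappa_{n-1}(a_1,\dots,a_ia_{i+1},\dots,a_n)$, and relating these two is precisely the formula for cumulants with products as entries,
\[
\kappa_{n-1}(a_1,\dots,a_ia_{i+1},\dots,a_n)=\sum_{\substack{\gamma\in NC(n)\\ \gamma\vee\sigma=\mathbbm{1}_n}}\prod_{C\in\gamma}\kappa_{|C|}(a_C),
\]
with $\sigma$ the partition pairing $i$ with $i+1$. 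That identity is a substantive combinatorial lemma in its own right and does not follow from the interval-block observation used in the easy direction, contrary to your closing claim. Without it (or a substitute, such as invoking the uniqueness of the joint distribution determined by freeness together with the already-established implication (ii) $\Rightarrow$ (i)), your induction does not close. A smaller omission: the auxiliary fact that $\kappa_m$ vanishes for $m\geq 2$ when an entry equals $1$, on which your centering step relies, itself requires a separate (easy) inductive proof.
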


 An immediate consequence is the additive property of the Voiculescu's $R$-transform (see \cite{ns}, \cite{haagerup}, \cite{vdn}):

\begin{prop}\label{Rtransf}
 If, for any $a\in \ca$, we define $\displaystyle{R_a(z)=\sum_{n=1}^n\kappa_n(X)z^n}$,
 then, for $X,Y$ free, we have that $R_{X+Y}=R_X+R_Y$.
 \end{prop}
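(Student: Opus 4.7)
The plan is to reduce the statement about generating functions to a coefficient-wise identity $\kappa_n(X+Y) = \kappa_n(X) + \kappa_n(Y)$ for every $n \geq 1$, and then to derive this identity by combining the multilinearity of $\kappa_n$ with the vanishing criterion from Proposition \ref{vanishcumulants}.

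First I would expand $\kappa_n(X+Y,\dots,X+Y)$ using $n$-linearity of the free cumulants:
\[
\kappa_n(X+Y,\dots,X+Y) \;=\; \sum_{(\varepsilon_1,\dots,\varepsilon_n)\in\{X,Y\}^n} \kappa_n(\varepsilon_1,\dots,\varepsilon_n).
\]
There are $2^n$ terms in the sum; the two ``pure'' tuples contribute $\kappa_n(X)+\kappa_n(Y)$ in the notation of the excerpt. For every ``mixed'' tuple (i.e.\ one containing both an $X$ and a $Y$), I invoke the implication (i) $\Rightarrow$ (ii) of Proposition \ref{vanishcumulants} applied to the unital subalgebras $\ca_1,\ca_2\subseteq\ca$ generated by $X$ and by $Y$ respectively: since $X$ and $Y$ are free, mixed cumulants vanish, and so each such term is $0$. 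Thus only the two pure contributions survive and we obtain $\kappa_n(X+Y)=\kappa_n(X)+\kappa_n(Y)$.

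The conclusion is then immediate from the definition of $R_a(z)$: summing $z^n$ times the previous identity over $n\geq 1$ gives $R_{X+Y}(z)=R_X(z)+R_Y(z)$ as formal power series.

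The only non-routine ingredient is the vanishing of mixed cumulants, which is supplied by Proposition \ref{vanishcumulants}; everything else is bookkeeping. Hence in this proof there is no genuine obstacle — the entire weight of the argument sits in the cited equivalence between freeness and the vanishing of mixed cumulants, and this proposition reduces the additivity of $R$ to a one-line expansion.
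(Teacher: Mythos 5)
Your proof is correct and is precisely the standard argument the paper has in mind: the paper gives no written proof, calling the proposition an ``immediate consequence'' of Proposition \ref{vanishcumulants}, and your expansion by multilinearity plus the vanishing of mixed cumulants is exactly that consequence. Nothing further is needed.
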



  By a non-crossing linked partition $\pi$ of the ordered set
 $\{1,2,\dots,n\}$ we will understand a collection $B_1,\dots, B_k$
 of subsets of $\{1,2,\dots,n\}$, called blocks, with the following
 properties:
 \begin{enumerate}
 \item[(a)]$\displaystyle{\bigcup_{l=1}^kB_l=\{1,\dots,n\}}$
 \item[(b)]$B_1,\dots,B_k$ are non-crossing, in the sense that there
 are no two blocks $B_l, B_s$ and $i<k<p<q$ such that $i,p\in B_l$
 and $k,q\in B_s$.
 \item[(c)] for any $1\leq l,s\leq k$, the intersection $B_l\bigcap
 B_s$ is either void or contains only one element. If
 $\{j\}=B_i\bigcap B_s$, then $|B_s|, |B_l|\geq 2$ and $j$ is the
 minimal element of only one of the blocks $B_l$ and $B_s$.
\end{enumerate}

We will use the notation $s(\pi)$ for the set of all $1\leq k\leq n$ such that there are no blocks of $\pi$ whose minimal element is $k$. A block $B=i_1<i_2<\dots <i_p$ of $\pi$ will be called \emph{exterior} if there is no other block
$D$ of $\pi$ containing two elements $l,s$ such that $l=i_1$ or $l<
i_1<i_p<s$. The set of all non-crossing linked partitions on $\{1,\dots, n\}$
 will be denoted by $NCL(n)$.

\textbf{Example 2}: Below is represented graphically the
non-crossing linked
partition\\
 $\pi=(1,4,6,9), (2,3), (4,5),(6,7,8),(10,11),
(11,12)$ $\in NCL(12)$. Its exterior blocks are $(1, 4, 6, 9)$ and $(10, 11)$.

 \setlength{\unitlength}{.13cm}
 \begin{equation*}
 \begin{picture}(10,8)

\put(-16,3){\circle*{1}}

\put(-12,3){\circle*{1}} \put(-12,3){\line(0,1){3}}

\put(-8,3){\circle*{1}}\put(-8,3){\line(0,1){3}}

\put(-4,3){\circle*{1}}

\put(0,3){\circle*{1}}\put(0,3){\line(0,1){3}}

\put(4,3){\circle*{1}}

\put(8,3){\circle*{1}}\put(8,3){\line(0,1){3}}

\put(12,3){\circle*{1}}\put(12,3){\line(0,1){3}}

\put(16,3){\circle*{1}}

\put(20,3){\circle*{1}}

\put(24,3){\circle*{1}}

\put(28,3){\circle*{1}}\put(28,3){\line(0,1){3}}

\put(-12,6){\line(1,0){4}}

\put(8,6){\line(1,0){4}}

\put(-4,3){\line(4,3){4}}

\put(4,3){\line(4,3){4}}

\put(24,3){\line(4,3){4}}


\put(-16.5,0.2){1}

\put(-12.5,0.2){2}

\put(-8.5,0.2){3}

\put(-4.5,0.2){4}

\put(-0.5,0.2){5}

\put(3.5,0.2){6}

\put(7.5,0.2){7}

\put(11.5,0.2){8}

\put(15.5,0.2){9}

\put(19,0.2){10}

\put(23,0.2){11}

\put(27,0.2){12}

\linethickness{.55mm}

\put(-16,3){\line(0,1){5}}

\put(-16,8){\line(1,0){32}}

\put(20,8){\line(1,0){4}}

\put(20,3){\line(0,1){5.1}}

\put(24,3){\line(0,1){5.1}}

\put(16,3){\line(0,1){5}}

\put(4,3){\line(0,1){5.1}}

\put(-4,3){\line(0,1){5}}

 \end{picture}
 \end{equation*}

 Let $\ca^{\circ}=\ca\setminus Ker \phi$. Using non-crossing linked partitions, we define the \\ $t$-\emph{coefficients} $\{t_n\}_{n=0}^\infty$
 as
the mappings
\[ t_n:\ca\times\left(\ca^\circ\right)^n\lra\mathbb{C}\]
 given by the following recurrence:
 \begin{equation}\label{tcoeff}
 \phi(X_1\cdots X_n)=\sum_{\pi\in NCL(n)}\Large(\prod_{\substack{B=\text{block in } \pi\\B=(i_1,\dots,i_l)}}
 t_{l-1}(X_{i_1},\dots,X_{i_l})\cdot \prod_{k\in s(\pi)}t_0(X_k)\Large).
 \end{equation}
To simplify the writing we will use the shorthand notations $t_\pi[X_1,\dots, X_n]$ for the summing term of the right-hand side of (\ref{tcoeff}),  and $t_\pi[X]$, respectively $t_n(X)$ for $t_\pi(X, \dots, X)$, respectively $t_n(X,\dots, X)$.

\begin{remark}\emph{The mappings $t_n$ are well-defined. Indeed, $t_{n-1}(X_1,\dots, X_{n})$ appears only once and has a non-zero coefficient in the right hand side of (\ref{tcoeff}), namely in}
\[t_{\mathbbm{1}_n}[X_1,\dots, X_n]=t_{n-1}(X_1,\dots, X_{n})\prod_{l=2}^nt_0(X_l)\]
\emph{where $\mathbbm{1}_n$ is the partition with a single block} $(1, 2, \dots, n)$.

\emph{Also, while the free cumulants are multilinear, the $t$-coefficients have the property}
\[ t_n(c_0X_0,c_1X_1,\dots,c_nX_n)=c_0t_n(X_0,X_1,\dots, X_n)\]
\emph{for all} $c_0\in \mathbb{C}, c_1, \dots, c_n\in\mathbb{C}^\ast$.
\emph{The above relation is a immediate consequence of (\ref{tcoeff}), since in each therm of the right-hand side of (\ref{tcoeff}), there is exactly one factor containing $X_0$, on the first position in its block, and exactly two factors containing $X_j$ ($j\geq 1$), among which only one has $X_j$ in the first position.}
\end{remark}

 An alternate form of (\ref{tcoeff}) is described below. For $1\leq k\leq n$ and $\pi\in NCL(n)$, we define
 \[
 t_{[k,\pi]}(X_1,\dots, X_n)=
  \left\{
  \begin{array}{ll}
  t_{s}(X_k, X_{i(1)},\dots, X_{i(s)}) & \text{if $k$ is the minimal element}\\
   &\text{of the block }(k, i(1),\dots, i(s))\\
  t_0(X_k) & \hspace{-2cm}\text{if $k$ is not the minimal element of any block.}
  \end{array}
 \right.
 \]
  Then
  \begin{eqnarray*}
  t_\pi(X_1,\dots, X_n)
  &=&
  \prod_{k=1}^n t_{[k,\pi]}(X_1,\dots, X_n)\\
  \phi(X_1\cdots X_n)
   &=&
   \sum_{\pi\in NCL(n)}t_\pi(X_1,\dots, X_n).
  \end{eqnarray*}

 As shown in \cite{dykema} and \cite{mpttransf}, the $T$-transform, defined via
$\displaystyle \displaystyle{T_X=\sum_{n=0}^\infty t_n(X)z^{n}},$
  has the property that $T_{XY}=T_XT_Y$ for all $X,Y$ free elements of $\ca^\circ$. Notable is also the role of $NCL(n)$ in defining a conditionally free version of the $T$-transform (see \cite{mpttransf}). In the following sections we will discuss the lattice structure of $NCL(n)$, prove a property similar to Proposition \ref{vanishcumulants} for the $t$-coefficients and give a proof for the multiplicative property of the $T$-transform based on the connection between $NCL(n)$, the Kreweras complement on $NC(n)$ and planar rooted trees.

 \section{The lattice $NCL(n)$}

 On $NCL(n)$ we define a order
 relation by saying that $\pi\su \sigma$ if for any block $B$
 of $\pi$ there exist $D_1,\dots, D_s$ blocks of $\sigma$ such that
 $\displaystyle {B=D_1\cup\dots\cup D_s}$.
  With respect to the order relation $\su$, the set $NCL(n)$ is a lattice.
 The maximal,
 respectively the minimal element are $\mathbbm{1}_n=(1,2,\dots,n)$ and  $0_n=(1),(2),\dots,(n)$. Note also that $NC(n)$ is
 a sublattice of $NCL(n)$.

 We say that $i$ and $j$ are \emph{connected }in $\pi\in NCL(n)$ if there
 exist $B_1,\dots,B_s$ blocks of $\pi$ such that $i\in B_1$, $j\in
 B_s$ and $B_k\cap B_{k+1}\neq \varnothing$, $1\leq k\leq s-1$.

 To $\pi\in NCL(n)$ we assign the partition $c(\pi)\in
 NC(n)$ defined as follows: $i$ and $j$ are in the same block of
 $c(\pi)$ if and only if they are connected in $\pi$. (I. e. the blocks of $c(\pi)$ are exactly the connected components of $\pi$.) We
 will use the notation
 \[
 [c(\pi)]=\{\sigma\in NCL(n):
 c(\sigma)=c(\pi)\}.
 \]

 In
 the above Example 2, we have that 5 and 8 as well as 10 and 12 are
 connected. More precisely,  $c(\pi)=(1, 4, 5, 6, 7, 8, 9), (2, 3),  (10, 11, 12)$.

 From the definition of the order relation $\su$, we have that, for every $\gamma \in NC(n)$, $[\gamma]$ is a sublattice of
  $NCL(n)$ and its maximal element is $\gamma$. Moreover,
 if $\gamma$ has the blocks $B_1,\dots, B_s$, each $B_l$ of cardinality
  $k_l$, then  we have the following ordered set
  isomorphism:

  \begin{equation}\label{factorization}
[c(\pi)] \simeq
[\mathbbm{1}_{k_1}]\times\cdots\times[\mathbbm{1}_{k_s}]
  \end{equation}
The above factorization has two immediate consequences:

\begin{prop}\label{cum-tcoeff}
For any positive integer $n$ and any $X_1,\dots, X_n\in\ca$ we have that
\[
\kappa_n(X_1,\dots,X_n)=\sum_{\pi\in[\mathbbm{1}_n]}t_\pi[X_1,\dots, X_n]
\]
\end{prop}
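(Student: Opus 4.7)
The plan is to compare the two moment expansions: the one defining the free cumulants (\ref{cumulants}) and the one defining the $t$-coefficients (\ref{tcoeff}), after regrouping the latter according to connected components.

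First, I would partition $NCL(n)$ as the disjoint union $\bigsqcup_{\gamma\in NC(n)}[\gamma]$ and rewrite the defining identity for $t$ as
\begin{equation*}
\phi(X_1\cdots X_n)=\sum_{\gamma\in NC(n)}\sum_{\pi\in[\gamma]}t_\pi[X_1,\dots,X_n].
\end{equation*}
The crucial next step is to apply the factorization (\ref{factorization}): if $\gamma$ has blocks $B_1,\dots,B_s$ with $B_l=(i^{(l)}_1,\dots,i^{(l)}_{k_l})$, then every $\pi\in[\gamma]$ decomposes uniquely as an $s$-tuple $(\pi_1,\dots,\pi_s)$ with $\pi_l\in[\mathbbm{1}_{k_l}]$ the restriction of $\pi$ to $B_l$. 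Since each block of $\pi$ sits inside a single block of $c(\pi)=\gamma$, the product formula $t_\pi[X_1,\dots,X_n]=\prod_{k=1}^n t_{[k,\pi]}(X_1,\dots,X_n)$ factors accordingly, yielding
\begin{equation*}
\sum_{\pi\in[\gamma]}t_\pi[X_1,\dots,X_n]=\prod_{l=1}^{s}\sum_{\sigma\in[\mathbbm{1}_{k_l}]}t_\sigma\bigl[X_{i^{(l)}_1},\dots,X_{i^{(l)}_{k_l}}\bigr].
\end{equation*}

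Defining $\widetilde{\kappa}_m(Y_1,\dots,Y_m)=\sum_{\sigma\in[\mathbbm{1}_m]}t_\sigma[Y_1,\dots,Y_m]$, the previous two displays combine to give
\begin{equation*}
\phi(X_1\cdots X_n)=\sum_{\gamma\in NC(n)}\prod_{\substack{B=\text{block in }\gamma\\ B=(j_1,\dots,j_k)}}\widetilde{\kappa}_k(X_{j_1},\dots,X_{j_k}).
\end{equation*}
This is exactly the recurrence (\ref{cumulants}) that defines the free cumulants $\kappa_m$.

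I would finish by induction on $n$: the relation above expresses $\widetilde{\kappa}_n(X_1,\dots,X_n)$ in terms of $\phi(X_1\cdots X_n)$ and the values $\widetilde{\kappa}_m$ for $m<n$ (coming from the partitions $\gamma\neq \mathbbm{1}_n$, which have only smaller blocks), and the same expression with $\kappa$ in place of $\widetilde{\kappa}$ holds by (\ref{cumulants}). The inductive hypothesis $\widetilde{\kappa}_m=\kappa_m$ for $m<n$ forces $\widetilde{\kappa}_n=\kappa_n$, which is the statement. The main (minor) obstacle is verifying cleanly that the factorization (\ref{factorization}) is compatible with the multiplicative definition of $t_\pi$; once one observes that no block of $\pi$ can straddle two distinct blocks of $c(\pi)$, this is immediate.
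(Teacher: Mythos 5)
Your proposal is correct and follows essentially the same route as the paper: decompose $NCL(n)$ into the classes $[\gamma]$, use the factorization (\ref{factorization}) together with the multiplicativity of $t_\pi$ over connected components to rewrite the moment formula as the cumulant recurrence with $\sum_{\sigma\in[\mathbbm{1}_m]}t_\sigma$ in place of $\kappa_m$, and conclude by induction on $n$. Your introduction of $\widetilde{\kappa}_m$ merely makes explicit the inductive comparison that the paper leaves implicit.
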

\begin{proof}
For $n=1$ the assertion is clear, since, by definition,
 $$\phi(X_1)=\kappa_1(X_1)=t_0(X_1).$$
If $n>1$, note first that
\[
\sum_{\pi\in NCL(n)}t_\pi[X_1,\dots,X_n]=\sum_{\gamma\in NC(n)}\sum_{\pi\in[\gamma]}t_\gamma[X_1,\dots,X_n].
\]
 Also, if $\pi\in NCL(n)$ has the connected components $B_1,\dots, B_k$ such that each
 $B_l=(i_{l,1},\dots, i_{l,s(l)})$, then
 \[
 t_\pi[X_1,\dots,X_n]=\prod_{l=1}^kt_{\pi_{|B_l}}[X_{i_{i,1}}, \dots, X_{i_{l,s(l)}}],
 \]
where  $\pi_{|B_l}$ denotes the restriction of $\pi$ to the set $B$.

Since the blocks of $c(\pi)$ are by definition the connected components of $\pi$, the relation (\ref{tcoeff}) becomes:
\begin{eqnarray*}
\phi(X_1\cdots X_n)
&=&
\sum_{\gamma\in NC(n)}\prod_{\substack{B=\text{block in } \gamma\\ B=(i_1,\dots, i_s)}}\ \sum_{\pi\in[\gamma]}t_{\pi_{|B}}[X_{i_1},\dots, X_{i_s}],
\end{eqnarray*}
 and the factorization (\ref{factorization}) gives:
 \begin{eqnarray*}
\phi(X_1\cdots X_n)
&=&
\sum_{\gamma\in NC(n)}\prod_{\substack{B=\text{block in } \gamma\\ B=(i_1,\dots, i_s)}}\ \sum_{\sigma\in[\mathbbm{1}_s]}t_{\sigma}[X_{i_1},\dots, X_{i_s}].
\end{eqnarray*}

The conclusion follows now utilizing (\ref{cumulants}) and induction on $n$.

\end{proof}

\begin{prop}\label{vanishtcoeff}\emph{(Characterization of freeness in terms of $t$-coefficients)}\\
 If  $\{\ca_i\}_{i\in I}$ is a family of  unital subalgebras  of $\ca$, then the following statements are equivalent:
 \begin{enumerate}
 \item[(i)] $\{\ca_i\}_{i\in I}$ are free independent.
 \item[(ii)] For all $a_k\in\ca_{\i(k)}$ ($k=1,\dots,n$) and $i(k)\in I$ we have that
   \[
   t_{n-1}(a_1,\dots, a_n)=0
   \]
    whenever there exist $1\leq l,k\leq n$ with $i(l)\neq i(k)$.
 \end{enumerate}
\end{prop}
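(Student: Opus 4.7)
The plan is to use Proposition \ref{cum-tcoeff} to transfer the cumulant characterization of freeness (Proposition \ref{vanishcumulants}) into a $t$-coefficient characterization. The working identity is
\[
\kappa_n(a_1,\dots,a_n)=\sum_{\pi\in[\mathbbm{1}_n]}t_\pi[a_1,\dots,a_n],
\]
and the whole argument will hinge on a single combinatorial observation: for every $\pi\in[\mathbbm{1}_n]$ and every index map $i:\{1,\dots,n\}\to I$ that is non-constant, there exists a block $B$ of $\pi$ on which $i$ is non-constant.

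I would prove this observation by contradiction: suppose $i|_B$ is constant with value $v(B)$ for every block $B$ of $\pi$. Whenever two blocks $B$ and $B'$ share an element $j$ one has $v(B)=i(j)=v(B')$. Since $\pi\in[\mathbbm{1}_n]$ means $c(\pi)=\mathbbm{1}_n$, any two indices are joined by a chain of pairwise-overlapping blocks, so propagating the equalities $v(B)=v(B')$ along such a chain forces $i$ to be constant on $\{1,\dots,n\}$, contradicting the hypothesis.

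With the observation available, (ii)$\Rightarrow$(i) is immediate: for any tuple $(a_k\in\ca_{i(k)})_{k=1}^n$ with $i$ non-constant and any $\pi\in[\mathbbm{1}_n]$, the observation supplies a block $B=(j_1,\dots,j_l)$ on which $i$ is non-constant, so (ii) kills the factor $t_{l-1}(a_{j_1},\dots,a_{j_l})$ and hence $t_\pi[a_1,\dots,a_n]$. Summing gives $\kappa_n(a_1,\dots,a_n)=0$ for every mixed tuple, and Proposition \ref{vanishcumulants} delivers freeness.

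For (i)$\Rightarrow$(ii) I would proceed by induction on $n$. The base case $n=2$ is immediate since $\kappa_2(a_1,a_2)=t_1(a_1,a_2)\cdot t_0(a_2)$ and $t_0(a_2)=\phi(a_2)\neq 0$ for $a_2\in\ca^\circ$. For $n\geq 3$, isolate the term $\pi=\mathbbm{1}_n$ in the sum; any other $\pi\in[\mathbbm{1}_n]$ has all blocks of size strictly less than $n$ (condition (c) forbids a block of size $n$ to coexist with any further block), so the combinatorial observation together with the inductive hypothesis forces $t_\pi[a_1,\dots,a_n]=0$. Combined with $\kappa_n(a_1,\dots,a_n)=0$ coming from (i), this yields
\[
t_{\mathbbm{1}_n}[a_1,\dots,a_n]=t_{n-1}(a_1,\dots,a_n)\prod_{l=2}^n t_0(a_l)=0,
\]
and dividing by the non-vanishing $t_0(a_l)$ completes the induction. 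The only genuine content is the combinatorial observation about connected $NCL$-partitions; once it is in place the two implications are essentially bookkeeping with Proposition \ref{cum-tcoeff}.
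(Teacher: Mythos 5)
Your proposal is correct and follows essentially the same route as the paper: both directions reduce to Proposition \ref{vanishcumulants} via the identity of Proposition \ref{cum-tcoeff}, using the connectivity of $\pi\in[\mathbbm{1}_n]$ to find a mixed block, and an induction on $n$ that isolates the term $\pi=\mathbbm{1}_n$. Your version is slightly more careful than the paper's in spelling out the combinatorial observation and in keeping track of the nonvanishing factors $t_0(a_l)=\phi(a_l)\neq 0$, but the underlying argument is the same.
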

\begin{proof}
It suffices to show the equivalence between \ref{vanishtcoeff}(ii) and \ref{vanishcumulants}(ii).

Let first suppose that \ref{vanishtcoeff}(ii) holds true. Choose a positive integer $n$ and $\pi\in [\mathbbm{1}_n]$. If not all  $X_k$ are coming from the same subalgebra $\ca_j$, since $1,\dots, n$ are all connected in $\pi$, there is a block $B=(i_1,\dots, i_s)$  such that $\{X_{i_1},\dots, X_{i_s}\}$ contains elements from different subalgebras, therefore $t_{s-1}(X_{i_1},\dots, X_{i_s})=0$, hence $t_\pi[X_1,\dots, X_n]=0$ and \ref{cum-tcoeff} implies that $\kappa_n(x_1,\dots, X_n)=0$, i. e. \ref{vanishcumulants}(ii).

Suppose now that \ref{vanishcumulants}(ii) holds true. Proposition \ref{cum-tcoeff}implies that $\kappa_2=t_1$, so \ref{vanishtcoeff}(ii) is true for $n=2$. An inductive argument on $n$ will complete the proof:

 Proposition \ref{cum-tcoeff} also implies that
\begin{eqnarray*}
\kappa_n(X_1,\dots, X_n)=t_{n-1}(X_1,\dots,X_n)+
\sum_{\substack{\pi\in[\mathbbm{1}_n]\\ \pi\neq \mathbbm{1}_n}}t_\pi[X_1,\dots, X_n].
\end{eqnarray*}

If not all  $X_k$ are coming from the same subalgebra $\ca_j$, then the left-hand side of the above equation cancels from \ref{vanishcumulants}(ii) and so does the second term of the right-hand side, from the induction hypothesis, so q.e.d..
\end{proof}

\section{planar trees and the multiplicative property of the $T$-transform}

In this section we will give a combinatorial proof for the multiplicative property of the $T$-transform, that is
 \begin{equation}\label{Tmultipl}
 T_{XY}=T_XT_Y \ \text{whenever } X \text{and } Y \text{are free elements from } \ca^\circ.
 \end{equation}

 The proof will consist mainly in describing certain bijections between
 $[\mathbbm{1}_n]$ and the set $\mathfrak{T}(n)$ of \emph{planar trees}
 with $n$ vertices, respectively between $NC(n)$ and the set $\mathfrak{B}(n)$
  of \emph{bicolor planar trees} with $n$ vertices.

  \subsection{Non-crossing linked partitions and planar trees}

  By an \emph{elementary planar tree} we will understand a graph with
   $m\geq 1$ vertices,  $v_1,v_2,\dots, v_m$, and $m-1$ edges, or branches,
    connecting $v_1$ (called \emph{root}) to the  vertices $v_2, \dots, v_m$ (called \emph{offsprings}). By convention, a single vertex (with no offsprings) will be also considered an elementary planar tree.

  A \emph{planar tree} will be seen as consisting in a finite number
  of \emph{levels}, such that:
  \begin{enumerate}
  \item[-]first level consists in a single elementary planar tree,
  whose root will be also the root of the planar tree;
   \item[-]the $k$-th level will consist in a set of elementary
    planar trees such that their roots are among the offsprings of the $k-1$-th level.
      \end{enumerate}
  The set of elementary trees composing the planar tree $A$ will be denoted by  $E(A)$.

 Below are represented graphically the elementary planar tree $T_1$ and the 2-level planar tree $T_2$:

 \setlength{\unitlength}{.15cm}
 \begin{equation*}
 \begin{picture}(22,18)

\put(-22,8){\circle*{1}} \put(-22,8){\line(2,3){4}}

\put(-18,14){\circle*{1}}

\put(-18,8){\circle*{1}}\put(-18,8){\line(0,1){6}}

\put(-14,8){\circle*{1}} \put(-14,8){\line(-2,3){4}}

\put(-19,4){$T_1$}

\put(8,16){\circle*{1}}

\put(5,11){\circle*{1}} \put(5,11){\line(3,5){3}}


\put(11,11){\circle*{1}} \put(11,11){\line(-3,5){3}}

\put(8,6){\circle*{1}} \put(8,6){\line(3,5){3}}

\put(14,6){\circle*{1}} \put(14,6){\line(-3,5){3}}

\put(5,2){$T_2$}

\multiput(17,6)(4,0){3}{\line(1,0){2}}
\multiput(16,11)(4,0){3}{\line(1,0){2}}
\multiput(15,16)(4,0){3}{\line(1,0){2}}
\multiput(17,2)(4,0){3}{\line(1,0){2}}

\put(26,3){level 3}

\put(26,8){level 2}

\put(26,13){level 1}

 \end{picture}
 \end{equation*}

  We will need  to consider on the vertices of a planar tree the order relation ( similar to the  ``\emph{left depth first}'' order from \cite{eap}) given by:
   \begin{enumerate}
   \item[(i)]roots are less than their offsprings;
   \item[(ii)] offsprings of the same root are ordered from left to right;
    \item[(iii)]if $v$ is less that $w$, then all the offsprings of $v$ are smaller than any offspring of $w$.
    \end{enumerate}
        Intuitively, one may understand it as the order in which the vertices are passed by walking along the branches from the root to the right-most vertex, not counting vertices passed more than one time (see the example below).

  \textbf{Example} 3:

   \setlength{\unitlength}{.13cm}
 \begin{equation*}
 \begin{picture}(18,18)

\put(8,16){\circle*{1}}

\put(3,11){\circle*{1}} \put(3,11){\line(1,1){5}}

\put(3,6){\circle*{1}}\put(3,6){\line(0,1){5}}

\put(11,11){\circle*{1}} \put(11,11){\line(-3,5){3}}

\put(8,6){\circle*{1}} \put(8,6){\line(3,5){3}}

\put(14,6){\circle*{1}} \put(14,6){\line(-3,5){3}}

\put(8,18){1}

\put(1,11){2}

\put(2,3.5){3}

\put(12,10.5){4}

\put(7.4,3.5){5}

\put(13.5,3.5){6}
 \end{picture}
 \end{equation*}

 We
construct now the bijection
 $\Theta: [\mathbbm{1}_n]\lra\mathfrak{T}(n)$
  by putting  $\Theta(\pi)$ be the planar tree composed by the elementary trees of vertices
  numbered $(i_1, \dots, i_s)$ (with respect to the above order relation), for each $(i_1, \dots, i_s)$ block of $\pi$.

   More precisely, if $(1,2,i_1,\dots, i_s)$ is the block of $\pi$ containing 1,
   then the first level of $\Theta(\pi)$ is the elementary planar tree of root
    numbered 1 and offsprings numbered $(2,i_1,\dots, i_s)$.
     The second level of $\Theta(\pi)$ will be determined by the blocks
     (if any) having $2,i_1,\dots, i_s$ as first elements etc (see the example below)
     \smallskip

     \includegraphics[width=3in,height=1.1cm]{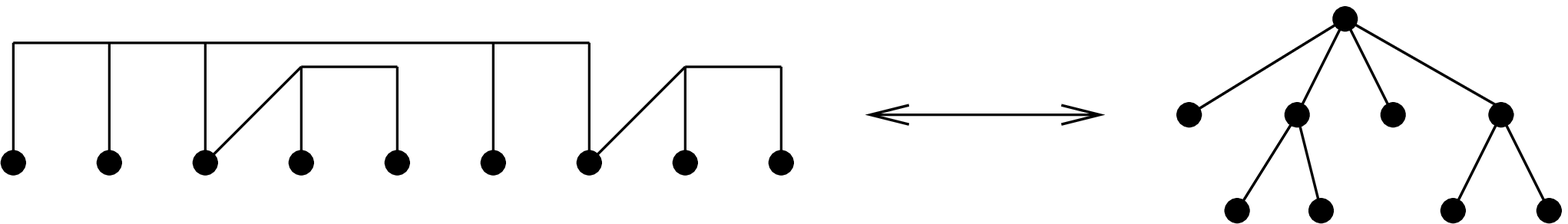}

    \smallskip

     It is easy to see that $\Theta$ is well-defined and injective.
     To show that $\Theta$ is bijective, it suffices to observe that
 $\Theta^{-1}$ is given by assigning to each tree the partition having
 the blocks given by the numbers of the vertices from the constituent
  elementary planar trees.
   Part (a) of the definition of $NCL(n)$ (see Section 1) is
   automatically verified, since the trees have exactly $n$ vertices.
   Part (b) follows from the conditions (ii) and (iii) in the definition of
    the order relation on the vertices, and part (c) from the conditions (i) and (ii).

 If $A_0$ is an elementary planar tree with $n$ vertices and $X\in\ca^\circ$, we define $\cE_X(A_0)=t_{n-1}(X)$. The evaluation $\cE_X$ extends to the set of planar trees by
 \[ \cE_X(A)=\prod_{A_0\in E(A)}\cE_X(A_0).\]

Consequently,
\begin{equation}\label{eq5}
\kappa_n(X)=\sum_{A\in
\mathfrak{T}(n)}\cE_X(A).
\end{equation}

\subsection{The Kreweras complement and bicolor planar trees}

For $\gamma\in NC(n)$, its \emph{Kreweras complement} $\kr(\gamma)$ is defined as follows. We consider the additional numbers $\overline{1}, \dots, \overline{n}$ forming the ordered set
\[ 1,\overline{1}, 2, \overline{2}, \dots, n, \overline{n}.\]
$\kr(\gamma)$ is defined to be the biggest element $\gamma^\prime\in NC( \overline{1}, \dots, \overline{n})\cong NC(n)$ such that
 \[
  \gamma \cup \gamma^\prime\in NC(1,\overline{1}, 2, \overline{2}, \dots, n, \overline{n}).
  \]
   The total number of blocks in $\gamma$ and $\kr(\gamma)$ is $n+1$ (see \cite{ns}, \cite{kreweras}).
   The Kreweras complement appears in the following corollary of Proposition \ref{vanishcumulants}:

   \begin{prop}\label{cumulantprod}
   If $X, Y$ are free elements of $\ca$, then
   \[ \kappa_n(XY)=\sum_{\gamma\in NC(n)}\kappa_{\gamma}[X]\kappa_{\kr(\gamma)}[Y].
   \]
   \end{prop}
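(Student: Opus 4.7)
The plan is to derive the identity from Prop \ref{vanishcumulants} via the product formula for free cumulants. For any $c_1, d_1, \ldots, c_n, d_n \in \ca$ one has
\[
\kappa_n(c_1 d_1, \ldots, c_n d_n) = \sum_{\substack{\sigma \in NC(2n) \\ \sigma \vee \tau_n = \mathbbm{1}_{2n}}} \kappa_\sigma[c_1, d_1, \ldots, c_n, d_n],
\]
where $\tau_n$ denotes the interval partition $\{\{1,2\}, \{3,4\}, \ldots, \{2n-1, 2n\}\}$. This identity follows by M\"obius inversion on $NC(n)$ applied to the moment-cumulant formula (\ref{cumulants}) for the alternating moment $\phi(c_1 d_1 \cdots c_n d_n)$, independently of freeness.

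Specializing $c_k = X$ and $d_k = Y$, Prop \ref{vanishcumulants} immediately kills every term whose block of $\sigma$ contains both an odd position (carrying $X$) and an even position (carrying $Y$). The surviving $\sigma$ split uniquely as $\sigma = \gamma \sqcup \delta$, with $\gamma$ a non-crossing partition of the odd positions (naturally identified with $NC(n)$) and $\delta$ of the even positions (identified with $NC(\bar 1, \ldots, \bar n)\cong NC(n)$). Under the interleaving $1, \bar 1, 2, \bar 2, \ldots, n, \bar n$, the non-crossing condition on $\sigma$ becomes $\delta \leq \kr(\gamma)$, by the very definition of the Kreweras complement.

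It then remains to check that the connectivity condition $\sigma \vee \tau_n = \mathbbm{1}_{2n}$ imposed by the product formula is equivalent to $\delta = \kr(\gamma)$. I expect this to be the main obstacle: $\kr(\gamma)$ is the \emph{maximal} element of $NC(\bar 1, \ldots, \bar n)$ that is non-crossing with $\gamma$, and the block-count identity $|\gamma| + |\kr(\gamma)| = n+1$ together with the observation that each coarsening of $\delta$ by merging two of its blocks reduces the number of connected components of $\sigma \vee \tau_n$ by exactly one forces $\delta$ to be maximal, i.e.\ $\delta = \kr(\gamma)$. Substituting this back into the product formula yields
\[
\kappa_n(XY) = \sum_{\gamma \in NC(n)} \kappa_\gamma[X]\, \kappa_{\kr(\gamma)}[Y],
\]
as required.
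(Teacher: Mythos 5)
The paper states this proposition without proof, presenting it simply as a corollary of Proposition \ref{vanishcumulants}; your argument is the standard derivation (the formula for free cumulants with products as entries, then vanishing of mixed cumulants, then identification of the surviving partitions with $\gamma\sqcup\kr(\gamma)$), so the route matches the paper's intent. Two points in your final step need tightening. First, merging two blocks of $\delta$ reduces the number of connected components of $\sigma\vee\tau_n$ by \emph{at most} one, not exactly one --- if the two blocks already lie in the same component the join is unchanged. The correct counting is the inequality $|\sigma\vee\tau_n|\geq|\sigma|+|\tau_n|-2n=|\gamma|+|\delta|-n$; since $\delta$ refines $\kr(\gamma)$ we have $|\delta|\geq|\kr(\gamma)|=n+1-|\gamma|$, so $|\sigma\vee\tau_n|\geq 1$ with equality forcing $|\delta|=|\kr(\gamma)|$ and hence $\delta=\kr(\gamma)$. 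Second, you only argue that connectivity forces $\delta=\kr(\gamma)$; you also need the converse, that $\bigl(\gamma\sqcup\kr(\gamma)\bigr)\vee\tau_n=\mathbbm{1}_{2n}$ for every $\gamma\in NC(n)$, since otherwise the surviving index set could a priori be a proper subset of $NC(n)$. This is a standard property of the Kreweras complement (it follows, e.g., from its maximality together with the same block count), but it should be stated. With these two repairs the proof is complete.
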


   Let $NC_s(2n)$ be the set of all $\gamma\in NC(2n)$ such that  elements from the same block of $\gamma$ have the same parity and $\gamma_+=Kr(\gamma_-)$, where
   \begin{eqnarray*}
   \gamma_{+}&=&\gamma_{|\{2,4,\dots, 2n\}}\\
  \gamma_-&=&\gamma_{|\{1,3,\dots, 2n-1\}}.
   \end{eqnarray*}
    Denote also $NCL_S(2n)=\{\pi\in NCL(2n): c(\pi)\in NC_s(2n)\}$.
   With the above notations, the relation from Proposition \ref{cumulantprod} becomes:
   \begin{eqnarray}
   \kappa_n(XY)&=&\sum_{\gamma\in NC_s(2n)}\kappa_{\gamma_-}[X]\kappa_{\gamma_+}[Y]\label{ncsprod}
   \end{eqnarray}

  For $\pi\in NC_S(2n)$, we will say that the blocks with odd elements are of color 1 and the ones with even elements are of color 0. Note that $\pi\in NC_S(2n)$ if and only if $\pi$ has exactly 2 exterior blocks, one of color 1 and one of color 0 and if $i_1$ and $i_2$ are two consecutive elements from the same block, then $\pi_{|(i_1+1, \dots, i_2-1)}$ has exactly one exterior block, of different color than the one containing $i_1$ and $i_2$.

  We will represent blocks of color 1 by solid lines and blocks of color 0 by dashed lines:
  \medskip

\includegraphics[width=3in,height=.9cm]{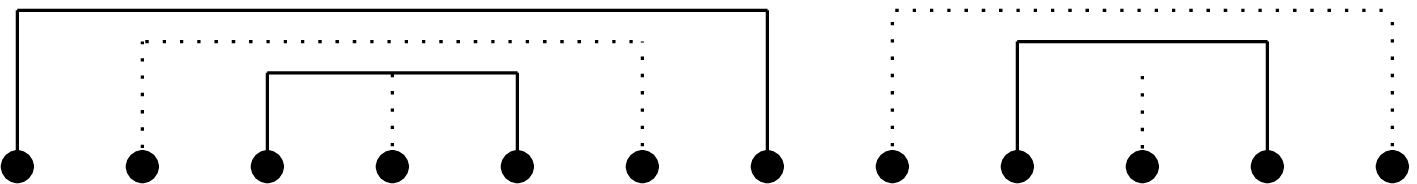}

\smallskip

  In the remaining part of this subsection we will define the set $\mathfrak{B}(n)$ of bicolor planar trees and construct a bijection $\Lambda: NC_S(2n)\lra\mathfrak{B}(n)$.

  A \emph{bicolor elementary  planar tree} is an elementary tree together with a mapping from its offsprings to $\{0, 1\}$ such that the offsprings whose image is 1 are smaller (in the sense of Section 3.1) than the offsprings of image 0. Branches toward offsprings of color ), respectively 1, will be also said to be of color 0, respectively 1. We will represent by solid lines the branches of color 1 and by dashed lines the branches of color 0. The set of all bicolor planar trees with $n$ vertices will be denoted by $\mathfrak{EB}(n)$. Below is the graphical representation of $\mathfrak{EB}(4)$:
  \smallskip

\includegraphics[width=3in,height=.9cm]{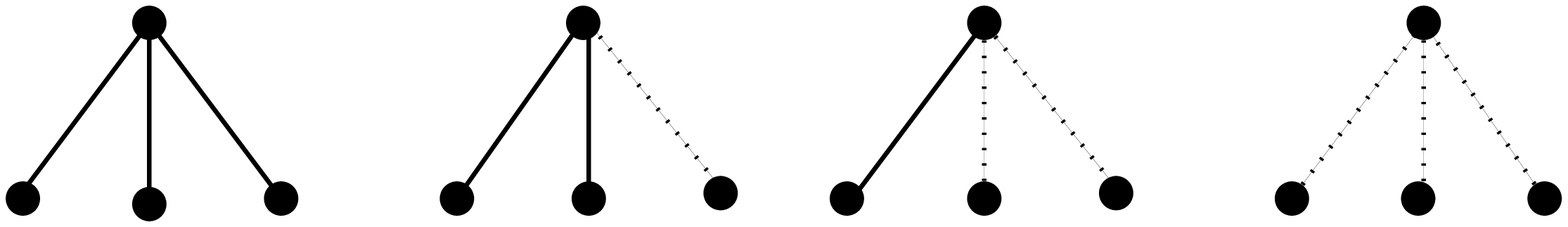}

\smallskip
  A \emph{bicolor planar tree} is a planar tree whose constituent elementary trees are all bicolor; the set of all bicolor planar trees will be denoted by $\mathfrak{B}(n)$.



   Given $\pi\in NCL_S(2n)$, we construct $\Lambda(\pi)\in\mathfrak{B}(n)$ as follows:

   \begin{enumerate}
   \item[-] If $(i_1,\dots, i_s)$ and $(j_1,\dots, j_p$ are the two exterior blocks of $\pi$, then the first level of $\Lambda(\pi)$ is an elementary tree with $s-1+p-1$ offsprings,  the first $s-1$ of color 1, corresponding to $(i_2,\dots, i_s)$, in this order, and the last $p-1$ of color 0, corresponding to $(j_2,\dots, i_p)$, in this order.

   \item[-]Suppose that $i_1$ and $i_2$ are consecutive elements in a block of $\pi$ already represented in an elementary tree of $\Lambda(\pi)$, that $\pi_{}$ has the exterior block $(j_1,\dots, j_p)$ and that $i_2$ id the minimal element of the block $(i_2, d_1,\dots, d_r)$. The the blocks $B=(j_1,\dots, j_p)$  and $D=(i_2, d_1,\dots, d_r)$ will have different colors. They will be then represented by an elementary tree of vertex corresponding to $i_2$ (the block of $i_1$ and $i_2$ has been already represented from the hypothesis), and with $p-1+k$ offsprings, keeping the colors of the blocks $B$ and $D$, the ones of color 1 placed before the ones of color 0.

   \end{enumerate}
Note that the mapping $\Lambda$ is bijective, the inverse is constructing reversing the steps above.

\includegraphics[width=8cm,height=2cm]{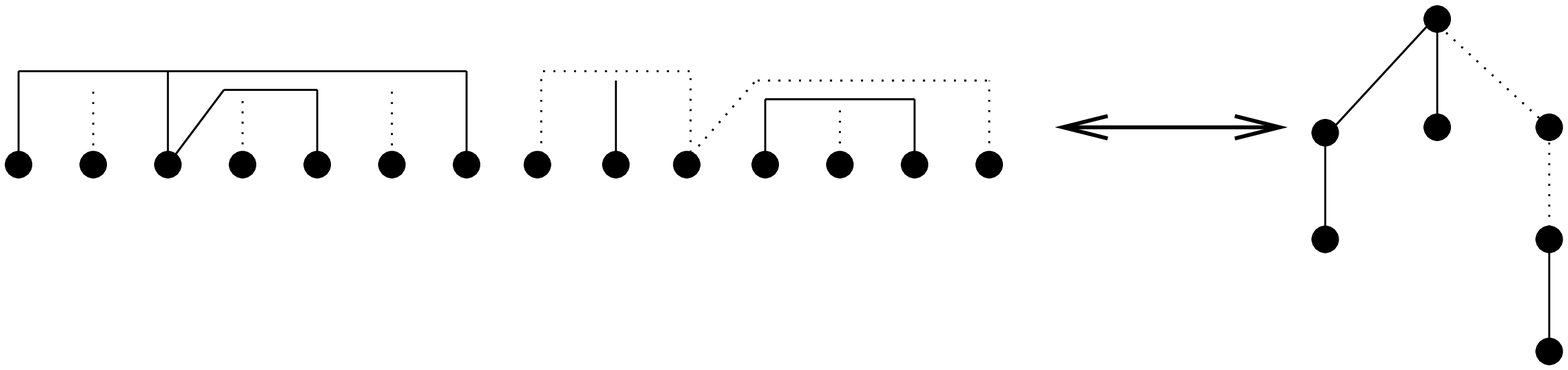}

  Fix $X,Y\in\ca^\circ$, free. If
  $B_0\in\mathfrak{EB}(n)$
   has $k$ offsprings of color 1 and $n-k-1$ offsprings of color 0, then we define
\[\omega_{X,Y}(B_0)=t_k(X)t_{n-k-1}(Y).\]
 The functional $\omega_{X,Y}$ extends to $B\in\mathfrak{B}(n)$ via
 \[ \omega_{X,Y}(B)=\prod_{B_0\in E(B)}\omega_{X,Y}(B_0).\]
For $\pi\in NC_S(2n)$ the definition of the mapping $\Lambda$ gives
\begin{equation}\label{eq10}
\kappa_{\pi_-}[X]\kappa_{\pi_+}[Y]=\omega_{X,Y}(\Lambda(\pi))
\end{equation}

\begin{thm} If $X,Y$ are free elements from $\ca^\circ$, then $T_{XY}=T_XT_Y$.
\end{thm}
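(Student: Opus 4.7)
The plan is to derive the identity $T_{XY}(z)=T_X(z)T_Y(z)$ coefficient-by-coefficient, by extracting from two parallel expansions of $\kappa_n(XY)$ the recurrence
\[
t_n(XY)=\sum_{k=0}^{n}t_k(X)\,t_{n-k}(Y),
\]
which is equivalent to the multiplicative claim.

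For the first expansion, I would apply (\ref{eq5}) directly to $XY\in\ca^\circ$, giving
\[
\kappa_n(XY)=\sum_{A\in\mathfrak{T}(n)}\cE_{XY}(A)=\sum_{A\in\mathfrak{T}(n)}\prod_{A_0\in E(A)}t_{|A_0|-1}(XY).
\]
For the second expansion, I combine Proposition~\ref{cumulantprod} rewritten as (\ref{ncsprod}) with (\ref{eq10}) and the bijectivity of $\Lambda$ on $NC_s(2n)$, obtaining
\[
\kappa_n(XY)=\sum_{B\in\mathfrak{B}(n)}\omega_{X,Y}(B).
\]
I would then unfold this sum via the natural bijection between $\mathfrak{B}(n)$ and the set of pairs $(A,(k_{A_0})_{A_0\in E(A)})$ with $A\in\mathfrak{T}(n)$ and $k_{A_0}\in\{0,1,\dots,|A_0|-1\}$: forgetting colors sends a bicolor planar tree to an ordinary planar tree, while $k_{A_0}$ records the position of the color split at each elementary subtree. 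Since $\omega_{X,Y}$ factors multiplicatively over elementary subtrees as $\prod t_{k}(X)t_{|B_0|-1-k}(Y)$, this converts the second expansion into
\[
\kappa_n(XY)=\sum_{A\in\mathfrak{T}(n)}\prod_{A_0\in E(A)}\sum_{k=0}^{|A_0|-1}t_k(X)\,t_{|A_0|-1-k}(Y).
\]

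I would then finish by induction on $n$. The base case $n=0$ reads $t_0(XY)=t_0(X)t_0(Y)$, which is just $\phi(XY)=\phi(X)\phi(Y)$, a direct consequence of freeness. For the inductive step, equate the two expansions for $\kappa_{n+1}(XY)$ and isolate the contribution of $\Theta(\mathbbm{1}_{n+1})\in\mathfrak{T}(n+1)$, the planar tree consisting of a single elementary subtree of size $n+1$ together with $n$ leaves: on the left this contribution equals $t_n(XY)\cdot t_0(XY)^n$, while on the right it equals $\bigl(\sum_{k=0}^{n}t_k(X)t_{n-k}(Y)\bigr)\cdot\bigl(t_0(X)t_0(Y)\bigr)^n$. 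Every other $A\in\mathfrak{T}(n+1)$ has all of its elementary subtrees of size at most $n$, so by the inductive hypothesis its LHS and RHS contributions agree term by term. Cancelling these matched contributions, together with the common factor $t_0(XY)^n=(t_0(X)t_0(Y))^n$ (valid because $X,Y\in\ca^\circ$ forces $t_0(X),t_0(Y)\neq 0$), produces the desired identity at level $n$.

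The main obstacle I anticipate is the combinatorial bookkeeping: one must confirm that $\Lambda$ restricts to a bijection $NC_s(2n)\to\mathfrak{B}(n)$, so that the passage from (\ref{ncsprod}) to a sum over $\mathfrak{B}(n)$ is legitimate, and then verify that the subsequent decomposition of a bicolor planar tree as a planar tree equipped with color-split data is a bijection compatible with the weight $\omega_{X,Y}$. Both are essentially transparent from the constructions in Section~3, but they must be set up carefully so that the cancellations in the inductive step go through cleanly.
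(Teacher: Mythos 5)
Your proposal is correct and follows essentially the same route as the paper: two expansions of $\kappa_n(XY)$, one over $\mathfrak{T}(n)$ via $\cE_{XY}$ and one over $\mathfrak{B}(n)$ via $\omega_{X,Y}$ (through (\ref{ncsprod}), (\ref{eq10}) and the bijection $\Lambda$), followed by an induction in which the non-elementary trees cancel and the elementary tree of maximal size yields the convolution identity $t_n(XY)=\sum_{k}t_k(X)t_{n-k}(Y)$. Your explicit bookkeeping of the leaf factors $t_0(XY)^n=(t_0(X)t_0(Y))^n$ and of the color-split bijection is slightly more careful than the paper's presentation but does not change the argument.
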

\begin{proof}
 We need to show that, for all $m\geq 0$
 \begin{equation}\label{final}
 t_m(XY)=\sum_{k=0}^mt_k(X)t_{m-k}(Y))
 \end{equation}
 For $m=0$, the assertion is trivial. Suppose (\ref{final}) true for $m\leq n-1$.
  Let $A_m$ be the elementary planar tree with $m$ vertices. In terms of planar trees, the induction hypothesis is written as
 \begin{equation}\label{elemsum}
 \cE_{XY}(A_m)=\sum_{B\in\mathfrak{EB}(n)}\omega_{X,Y}(B).\end{equation}

For example,
\setlength{\unitlength}{.15cm}
 \begin{equation*}
 \begin{picture}(28,7)

 \put(-21,4){$\cE_{XY}($}

\put(-14,3){\circle*{1}}

\put(-12,3){\circle*{1}}

\put(-10,3){\circle*{1}}

\put(-12,7){\circle*{1}}

\put(-14,3){\line(1,2){2}}

\put(-12,3){\line(0,1){4}}

\put(-10,3){\line(-1,2){2}}

\put(-9,4){$)$}

\put(-7,4){$=$}

\put(-5,4){$\omega_{XY}($}

\put(2,3){\circle*{1}}

\put(4,3){\circle*{1}}

\put(6,3){\circle*{1}}

\put(4,7){\circle*{1}}

\put(2,3){\line(1,2){2}}

\put(4,3){\line(0,1){4}}

\put(6,3){\line(-1,2){2}}

\put(7,4){$)$}

\put(9,4){$+$}

\put(11,4){$\omega_{XY}($}

\put(18,3){\circle*{1}}

\put(20,3){\circle*{1}}

\put(22,3){\circle*{1}}

\put(20,7){\circle*{1}}

\put(18,3){\line(1,2){2}}

\put(20,3){\line(0,1){4}}

\multiput(20,7)(.25,-.5){8}{\line(0,1){.2}}

\put(23,4){$)$}

\put(25,4){$+$}

\put(27,4){$\omega_{XY}($}

\put(34,3){\circle*{1}}

\put(36,3){\circle*{1}}

\put(38,3){\circle*{1}}

\put(36,7){\circle*{1}}

\put(34,3){\line(1,2){2}}

\multiput(36,3)(0,.5){8}{\line(0,1){.2}}

\multiput(36,7)(.25,-.5){8}{\line(0,1){.2}}

\put(39,4){$)$}

\put(41,4){$+$}

\put(43,4){$\omega_{XY}($}

\put(50,3){\circle*{1}}

\put(52,3){\circle*{1}}

\put(54,3){\circle*{1}}

\put(52,7){\circle*{1}}

\multiput(52,7)(-.25,-.5){8}{\line(0,1){.2}}

\multiput(52,3)(0,.5){8}{\line(0,1){.2}}

\multiput(52,7)(.25,-.5){8}{\line(0,1){.2}}

\put(55,4){$)$}

 \end{picture}
 \end{equation*}

 The relations
 (\ref{eq5}) and (\ref{eq10}) give:
 \begin{eqnarray}
 \sum_{A\in\mathfrak{T}(n)}\cE_{XY}(A)
  &=&\kappa_n(X) \nonumber\\
  &=&\sum_{\pi\in NC_S(2n)}\kappa_{\pi_-}[X]\kappa_{\pi_+}[Y]\nonumber\\
  &=&\sum_{\pi\in NC_S(2n)}\omega_{X,Y}(\Lambda(\pi))\nonumber\\
  &=&\sum_{B\in\mathfrak{B}(n)}\omega_{X,Y}(B)\label{eq7}.
  \end{eqnarray}

 All non-elementary trees from $\mathfrak{T}(n)$ consists on elementary trees with less than $n$ vertices. The relation (\ref{elemsum}) implies that the image under $\cE_{XY}$ of any such tree is the sum of the images under $\omega_{XY}$ of its colored versions. Hence
 \begin{equation}\label{eq11}
 \sum_{\substack{A\in\mathfrak{T}(n)\\A\neq A_n}}\cE_{XY}(A)
  =
   \sum_{\substack{B\in\mathfrak{B}(n)\\B\in\mathfrak{EB}(n)}}\omega_{X,Y}(B)
 \end{equation}

 Finally (\ref{eq11}) and (\ref{eq7}) give
 \[ \cE_{XY}(A_n)=\sum_{B\in\mathfrak{B}(n)}\omega_{X,Y}(B)\]
 that is (\ref{final}).
\end{proof}


\end{document}